\newtheorem{theorem}{Theorem}
\newtheorem{definition}[theorem]{Definition}
\newcommand*\proofnamestyle{\itshape}
\DeclareMathOperator{\tr}{Tr}
\begin{document}

    \title{Some operator monotone functions}
      \author{Frank Hansen}
      \date{March 16, 2008\\
      \tiny Final version}

      \maketitle

      \begin{abstract}
     We prove that the functions $ t\to (t^q-1)(t^p-1)^{-1} $ are operator monotone in the positive
     half-axis for $ 0<p\le q\le 1, $ and we calculate the two associated canonical representation formulae.
     The result is used to find new monotone metrics (quantum Fisher information) on the state space of quantum systems.
      \end{abstract}

      \vskip 2ex

      {\footnotesize\noindent
      Dedicated to Professor Jun Tomiyama on his 77th birthday with respect and affection}

       \section{Introduction}

    We consider the functions
    \begin{equation}\label{definition of f}
    f(t)=\left\{\begin{array}{ll}
    \displaystyle\frac{p}{q}\cdot\frac{t^q-1}{t^p-1} \qquad &t>0,\: t\ne 1\\[3ex]
    \displaystyle 1 &t=1
    \end{array}\right.
    \end{equation}
    for positive exponents $ p $ and $ q. $

    \begin{theorem}\label{main theorem}
    The function $ f $ in (\ref{definition of f}) is operator monotone for $ 0<p\le q\le 1. $
    \end{theorem}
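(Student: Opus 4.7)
My approach is to verify that $f$ is a Pick function on $\mathbf{C}\setminus(-\infty,0]$; by L\"owner's theorem this is equivalent to operator monotonicity on $(0,\infty)$. Using the principal branches of $z^p$ and $z^q$, the formula $\frac{p}{q}\cdot\frac{z^q-1}{z^p-1}$ defines a holomorphic function on $\mathbf{C}\setminus(-\infty,0]$, the apparent singularity at $z=1$ being removable with value $1$. It therefore suffices to prove $\operatorname{Im}f(z)\ge 0$ for $z$ in the open upper half-plane.

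For $z=re^{i\theta}$ with $\theta\in(0,\pi)$, the hypothesis $0<p\le q\le 1$ ensures that $z^p$ and $z^q$ both lie in the upper half-plane, and hence so do $z^p-1$ and $z^q-1$. Writing $z^p-1=Re^{i\Phi}$ and $z^q-1=Se^{i\Psi}$ with $R,S>0$ and $\Phi,\Psi\in(0,\pi)$, a short computation gives
\[
\operatorname{Im}f(z)=\frac{p}{q}\cdot\frac{S}{R}\,\sin(\Psi-\Phi),
\]
so the problem reduces to the geometric inequality $\arg(z^q-1)\ge\arg(z^p-1)$. I would prove the stronger fact that, for each fixed $z$ in the upper half-plane, the function $t\mapsto\arg(z^t-1)$ is non-decreasing on $(0,1]$. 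Differentiating and clearing the positive factor $r^t/|z^t-1|^2$, this reduces to
\[
\theta\bigl(r^t-\cos(t\theta)\bigr)\ge(\log r)\sin(t\theta).
\]

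The decisive simplification is the substitution $s=t\theta\in(0,\pi)$, $u=r^t>0$, under which $(s,u)$ ranges over all of $(0,\pi)\times(0,\infty)$ and the inequality loses its dependence on $t$, becoming
\[
s(u-\cos s)\ge(\log u)\sin s,\qquad s\in(0,\pi),\ u>0.\qquad(\ast)
\]
Proving $(\ast)$ is the main obstacle. At fixed $s$ the difference of the two sides is strictly convex in $u$ with unique minimum at $u^{*}=\sin s/s$, so $(\ast)$ is equivalent to the one-variable inequality
\[
\sin s\,\bigl(1+\log(s/\sin s)\bigr)\ge s\cos s,\qquad s\in(0,\pi).
\]
For $s\in[\pi/2,\pi)$ the right side is non-positive while the left side is positive. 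For $s\in(0,\pi/2)$ one has $\log(s/\sin s)\ge 0$ together with the classical inequality $\tan s\ge s$, so $\sin s(1+\log(s/\sin s))\ge\sin s\ge s\cos s$. Once $(\ast)$ is established the chain of reductions delivers $\operatorname{Im}f(z)\ge 0$, completing the verification that $f$ is Pick and hence operator monotone.
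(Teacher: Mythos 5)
Your proof is correct, but it takes a genuinely different route from the paper. The paper's argument (due to Ando) is a one-line reduction: it writes $f(t)=\int_0^1(\lambda t^p+1-\lambda)^{(q-p)/p}\,d\lambda$ and observes that each integrand is a Pick function because $0<\arg(\lambda z^p+1-\lambda)<p\pi$ forces $0<\arg(\lambda z^p+1-\lambda)^{(q-p)/p}<(q-p)\pi\le\pi$; operator monotonicity of $f$ then follows by integration. You instead verify the Pick property of $f$ itself head-on, reducing $\Im f(z)\ge 0$ to the monotonicity of $t\mapsto\arg(z^t-1)$ and then to the elementary two-variable inequality $s(u-\cos s)\ge(\log u)\sin s$ on $(0,\pi)\times(0,\infty)$, which you settle by convexity in $u$ and the classical bounds $\sin s\le s$ and $\tan s\ge s$. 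I checked the chain of reductions: the derivative computation, the substitution $(s,u)=(t\theta,r^t)$ landing inside the rectangle where $(\ast)$ is proved, the location of the minimizer $u^*=\sin s/s$, and the resulting one-variable inequality are all correct, and the singularity at $z=1$ is indeed removable. What your approach buys is a completely elementary, self-contained verification (in the spirit of the paper's remark about Furuta's proof) that moreover establishes directly the positivity of $r^{p+q}\sin(q-p)\theta-r^q\sin q\theta+r^p\sin p\theta$, i.e.\ inequality (\ref{nominator of imaginary part}), which the paper needs later and derives only as a consequence of the theorem; what the paper's proof buys is brevity and a structural explanation of why $f$ is operator monotone.
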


    \begin{proof} By an elementary calculation we may write
    \[
    f(t)=\int_0^1 (\lambda t^p + (1-\lambda))^{(q-p)/p}\, d\lambda.
    \]
    For $ \Im z>0 $ and $ 0<\lambda <1 $ we have $ 0<\arg( \lambda z^p +1-\lambda) < p\pi. $ Thus
    \[
    0<\arg( \lambda z^p +1-\lambda)^{(q-p)/p} < (q-p)\pi\le \pi.
    \]
    This shows that the integrand function and hence $ f $ is operator monotone \cite{kn:loewner:1934, kn:donoghue:1974, kn:bhatia:1997}.
    \end{proof}

    Let $ z=re^{i\theta} $ with $ r>0 $ and $ 0<\theta<\pi. $ Then the imaginary part
    \[
    \Im f(z)=\frac{p}{q}\cdot\frac{r^{p+q}\sin(q-p)\theta - r^q\sin q\theta + r^p\sin p\theta}{r^{2p}-2r^p\cos p\theta+1}
    \]
    and since $ f $ is operator monotone and non-constant, we thus have
    \begin{equation}\label{nominator of imaginary part}
    r^{p+q}\sin(q-p)\theta - r^q\sin q\theta + r^p\sin p\theta> 0
    \end{equation}
    for $ r>0 $ and $ 0<\theta<\pi. $ We need this result later in the paper.

    In the first version of the paper we proved (\ref{nominator of imaginary part}) directly to obtain operator monotonicity of $ f. $
    Then Furuta gave an elementary proof using the techniques developed in \cite[Proposition 3.1]{kn:furuta:2008}. Finally, Ando gave the above
    proof which is the shortest known to the author.

    \section{Integral representations}

    \begin{theorem}
    The function $ f $ in (\ref{definition of f}) has the canonical representation
    \[
    f(t)=\frac{p}{q}+\frac{p}{q}\int_0^\infty\frac{t}{\lambda (t+\lambda)}\cdot
     \frac{\lambda^{p+q}\sin(q-p)\pi - \lambda^q\sin q\pi
    + \lambda^p\sin p\pi}{\pi(\lambda^{2p}-2\lambda^p\cos p\pi+1)}\,d\lambda
    \]
    for $ 0<p\le q\le 1. $
    \end{theorem}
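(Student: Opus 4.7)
The plan is to apply the canonical Nevanlinna integral representation for operator monotone functions on $(0,\infty)$ and to read off the representing measure from the boundary value of $\Im f$ already computed just above.

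First I would fix the shape of the representation. Two endpoint observations do the work: $f(0^+)=p/q$ (immediate from~(\ref{definition of f})) and $f(t)/t\to 0$ as $t\to\infty$ (since $f(t)\sim (p/q)\,t^{q-p}$ with $q-p<1$). Combined with the operator monotonicity established in Theorem~\ref{main theorem}, these two limits force the Loewner--Nevanlinna representation of $f$, viewed as a Pick function on $\mathbb{C}\setminus(-\infty,0]$, to reduce to
\[
f(t)=f(0^+)+\int_0^\infty\frac{t}{\lambda(t+\lambda)}\,d\mu(\lambda)
\]
for a unique positive Borel measure $\mu$ on $(0,\infty)$; no linear term $\beta t$ survives, because $\beta=\lim_{t\to\infty}f(t)/t=0$.

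Next I would apply the Stieltjes inversion formula. Since $f$ extends analytically across the positive half-axis, $\mu$ is absolutely continuous on $(0,\infty)$ with density $(1/\pi)\,\Im f(-\lambda+i0^+)$. Setting $r=\lambda$ and $\theta=\pi$ in the formula for $\Im f(re^{i\theta})$ displayed just before the theorem yields exactly the density asserted in the statement, complete with the overall prefactor $p/q$.

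The principal technical point is justifying absolute continuity and checking the required integrability. Absolute continuity rests on the denominator $\lambda^{2p}-2\lambda^p\cos p\pi+1$ being strictly positive for all $\lambda>0$ (since $\cos p\pi<1$ for $0<p\le 1$), which makes the boundary function $\Im f(-\lambda+i0^+)$ continuous on $(0,\infty)$; non-negativity of the density is the boundary form ($\theta\to\pi$) of inequality~(\ref{nominator of imaginary part}). The integrability condition $\int d\mu(\lambda)/(\lambda(1+\lambda))<\infty$ of the Loewner measure then follows from a short asymptotic check using leading behaviour $\lambda^p$ at $\lambda\to 0^+$ and $\lambda^{q-p}$ at $\lambda\to\infty$.
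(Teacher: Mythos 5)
Your proposal is correct and follows essentially the same route as the paper: both invoke the Loewner--Nevanlinna representation for a positive operator monotone function, kill the linear term via the sublinear growth $f(t)\sim (p/q)t^{q-p}$, identify the constant as $f(0^+)=p/q$, and recover the density from the boundary value $(1/\pi)\Im f(-\lambda+i0^+)$ using the displayed formula for $\Im f(re^{i\theta})$ at $\theta=\pi$. Your additional remarks on absolute continuity and the integrability of the measure near $0$ and $\infty$ only make explicit what the paper leaves implicit.
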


    \begin{proof} The representing measure of the operator monotone function $ f $ is calculated by first
    considering the analytic extension $ f(z) $ to the upper complex half-plane, cf. \cite{kn:donoghue:1974}.
    If $ z=r e^{i\theta} $ approaches a real $ \lambda<0 $ from the upper complex half-plane, then
    $ r\to -\lambda $ and $ \theta\to\pi. $ Consequently, the imaginary part
    \[
    \Im f(z)\to \frac{p}{q}\cdot\frac{(-\lambda)^{p+q}\sin(q-p)\pi - (-\lambda)^q\sin q\pi
    + (-\lambda)^p\sin p\pi}{(-\lambda)^{2p}-2(-\lambda)^p\cos p\pi+1}\,.
    \]
    If $ z=r e^{i\theta} $ approaches zero from the upper complex half-plane, then $ \theta $ is
    indeterminate but $ r\to 0 $ and $ \Im f(z)\to 0. $ The representing measure \cite[Chapter II Lemma 1]{kn:donoghue:1974} is thus given by
     \[
    d\mu(\lambda)=\frac{p}{q}\cdot
    \frac{\lambda^{p+q}\sin(q-p)\pi - \lambda^q\sin q\pi
    + \lambda^p\sin p\pi}{\pi(\lambda^{2p}-2\lambda^p\cos p\pi+1)}\,d\lambda.
    \]
    Since $ f $ is both positive and operator monotone it is necessarily of the form
    \[
    f(t)=\alpha t +f(0) + \int_0^\infty\frac{t}{\lambda (t+\lambda)}\,d\mu(\lambda)
    \]
    where $ \alpha\ge 0 $ and the representing measure $ \mu $ satisfies
    \[
    \int_0^\infty (\lambda^2+1)^{-1}\,d\mu(\lambda)<\infty\quad\mbox{and}\quad
    \int_0^1 \lambda^{-1}\,d\mu(\lambda)<\infty.
    \]
    Finally, since the growth of $ f(t) $ is smaller than the growth of $ t $ in infinity we obtain $ \alpha=0, $
    and the statement follows.
    \end{proof}

    \begin{theorem}
    Let $ 0<p\le q\le 1. $ The function $ f $ in (\ref{definition of f}) has the canonical exponential representation
    \[
    f(t)=\frac{p}{q}\left(\frac{1-\cos q\frac{\pi}{2}}{1-\cos p\frac{\pi}{2}}\right)^{1/2}
    \exp\int_0^\infty
    \left(\frac{\lambda}{\lambda^2+1}-\frac{1}{\lambda+t}\right)h(\lambda)\,d\lambda.
    \]
    where
    \begin{equation}\label{generating weight function}
    h(\lambda)=\frac{1}{\pi}\arctan\frac{\lambda^{p+q}\sin(q-p)\pi - \lambda^q\sin q\pi +\lambda^p\sin p\pi}
    {\lambda^{p+q}\cos(q-p)\pi - \lambda^q\cos q\pi - \lambda^p\cos p\pi +1}\quad \lambda>0.
    \end{equation}
    (Notice that $ 0\le h(\lambda)\le 1/2 $ for every $ \lambda>0). $
    \end{theorem}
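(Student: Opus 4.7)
My plan is to invoke the canonical exponential representation of positive operator monotone functions on $(0,\infty)$, which says (cf.\ Donoghue) that any such $f$ can be written as
\[
\log f(z) = c + \int_0^\infty \Bigl(\frac{\lambda}{\lambda^2+1}-\frac{1}{\lambda+z}\Bigr) h(\lambda)\,d\lambda
\]
for some $c\in\mathbb{R}$ and a measurable weight $h:(0,\infty)\to[0,1]$, the weight being recovered from boundary values as $h(\lambda)=\pi^{-1}\lim_{\varepsilon\downarrow 0}\arg f(-\lambda+i\varepsilon)$. This follows by applying the Herglotz--Nevanlinna representation to $\log f$, which maps the upper half-plane into the strip $\{0\le\Im w\le\pi\}$ because $f$ is operator monotone and positive; the representing measure is supported on the negative axis because $f$ is real on $(0,\infty)$, and the linear term is absent because $\Im\log f$ is bounded. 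The task thus reduces to computing the boundary argument explicitly and then pinning down~$c$.

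To identify $h$ I would write $z=re^{i\theta}$ and rationalise $(z^q-1)/(z^p-1)$ by multiplying numerator and denominator by $\overline{z^p-1}$. The denominator becomes the positive real quantity $|z^p-1|^2=r^{2p}-2r^p\cos p\theta+1$, while the numerator splits as $U(r,\theta)+iV(r,\theta)$ with
\[
U=r^{p+q}\cos(q-p)\theta-r^q\cos q\theta-r^p\cos p\theta+1,\quad V=r^{p+q}\sin(q-p)\theta-r^q\sin q\theta+r^p\sin p\theta.
\]
Letting $r\to\lambda$ and $\theta\to\pi^-$ and invoking continuity, one obtains $\pi h(\lambda)=\arg\bigl(U(\lambda,\pi)+iV(\lambda,\pi)\bigr)$, which is exactly~(\ref{generating weight function}); positivity of $V(\lambda,\pi)$ follows from~(\ref{nominator of imaginary part}) by continuity, so the principal argument is well defined and lies in $[0,\pi]$.

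For the prefactor $e^c$ I would evaluate the representation at the special point $z=i$, exploiting the identity
\[
\frac{\lambda}{\lambda^2+1}-\frac{1}{\lambda+i}=\frac{i}{\lambda^2+1},
\]
which makes the integral at $z=i$ purely imaginary. Taking real parts gives $e^c=|f(i)|$, and the elementary identity $|e^{is}-1|^2=2(1-\cos s)$ then yields
\[
|f(i)|^2=\Bigl(\frac{p}{q}\Bigr)^2\frac{|i^q-1|^2}{|i^p-1|^2}=\Bigl(\frac{p}{q}\Bigr)^2\frac{1-\cos(q\pi/2)}{1-\cos(p\pi/2)},
\]
which is precisely the prefactor in the theorem.

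The main obstacle is the branch management implicit in the $\arctan$ of (\ref{generating weight function}): it must be read as the principal argument of $U+iV$ in $[0,\pi]$, not as the naive arctangent of the ratio $V/U$, since $U(\lambda,\pi)$ is not a priori positive. A sign analysis using the hypothesis $0<p\le q\le 1$ is required both to justify this interpretation and to establish the parenthetical bound $h\le 1/2$ where it applies. Apart from this, the argument is a direct assembly of the exponential representation with the calculation at~$z=i$.
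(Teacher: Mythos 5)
Your proposal follows essentially the same route as the paper: apply the Aronszajn--Donoghue exponential representation to the operator monotone function $\log f$, read off the weight $h$ from the boundary values of $\arg f(z)$ as $z$ approaches the negative axis (after rationalising $(z^q-1)/(z^p-1)$ to isolate the numerator $U+iV$, which is exactly the paper's $a+ib$), and fix the constant by evaluating at $z=i$. Your observation that $\frac{\lambda}{\lambda^2+1}-\frac{1}{\lambda+i}=\frac{i}{\lambda^2+1}$ makes the integral purely imaginary at $z=i$, so that $e^{c}=|f(i)|=\frac{p}{q}\left(\frac{1-\cos(q\pi/2)}{1-\cos(p\pi/2)}\right)^{1/2}$, is a transparent replacement for what the paper dismisses as ``a tedious calculation,'' and is correct. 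The one step you explicitly defer---the sign of $U$, needed both to read the $\arctan$ in (\ref{generating weight function}) literally and to get the bound $h\le 1/2$---is exactly where the paper inserts its remaining argument: having $b=V>0$ from (\ref{nominator of imaginary part}), it notes that the squared unimodular factor $(a+ib)^2/|a+ib|^2$ arising in $\Im\log f(z)$ has positive imaginary part $2ab/|a+ib|^2$, whence $a=U>0$ throughout the open upper half-plane and hence $U\ge 0$ in the boundary limit. So your outline is sound and matches the paper's; to make it a complete proof you must carry out that sign analysis rather than leave it as a flagged obstacle.
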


    \begin{proof}
    The exponential representation of $ f $ is obtained by considering the operator monotone function
    $ \log f(t). $ The analytic continuation $ \log f(z) $ to the upper complex half-plane
    has positive imaginary part bounded by $ \pi, $ cf. \cite{kn:aronszajn:1956, kn:donoghue:1974, kn:hansen:1981}.
    The representing measure of $ \log f(t) $ is therefore
    absolutely continuous with respect to Lebesgue measure with Radon-Nikodym derivative bounded by one. We calculate
    and obtain the expression
    \[
    \begin{array}{rl}
    \Im\log f(z)&\displaystyle
    =\frac{1}{2i}\log\frac{(r^q e^{iq\theta}-1)(r^pe^{-ip\theta}-1)}{(r^pe^{ip\theta}-1)(r^q e^{-iq\theta}-1)}\\[3ex]
    &\displaystyle
    =\frac{1}{2i}\log\left[\frac{a+ib}
    {(r^{2p}-2r^p\cos p\theta+1)^{1/2}(r^{2q}-2r^q\cos q\theta+1)^{1/2}}\right]^2,\\[3ex]
    \end{array}
    \]
    where
    \[
    \begin{array}{rl}
    a&=r^{p+q}\cos(q-p)\theta - r^q\cos q\theta - r^p\cos p\theta +1,\\[1ex]
    b&=r^{p+q}\sin(q-p)\theta - r^q\sin q\theta +r^p\sin p\theta.
    \end{array}
    \]
    We recognize from (\ref{nominator of imaginary part}) that $ b>0, $ and since the imaginary part of $ (a+ib)^2 $ is positive, we obtain
    that also $ a>0. $
    The expression inside the square is thus a complex number of modulus one in the first
    quadrant of the complex plane. It is of the form $ \exp i\phi(z) $ for some $ \phi(z) $ with
    $ 0<\phi(z)<\pi/2, $ where
    \[
    \phi(z)=\arctan\frac{r^{p+q}\sin(q-p)\theta - r^q\sin q\theta +r^p\sin p\theta}
    {r^{p+q}\cos(q-p)\theta - r^q\cos q\theta - r^p\cos p\theta +1}
    \]
    and $ \Im\log f(z)=\phi(z). $
    We let $ z=r e^{i\theta} $ approach a real $ \lambda<0 $ from the upper complex half-plane and obtain
    \[
    \Im\log f(z)\to\arctan\frac{(-\lambda)^{p+q}\sin(q-p)\pi - (-\lambda)^q\sin q\pi +(-\lambda)^p\sin p\pi}
    {(-\lambda)^{p+q}\cos(q-p)\pi - (-\lambda)^q\cos q\pi - (-\lambda)^p\cos p\pi +1}\,.
    \]
    The representing measure is thus given by the weight function
    \[
    h(\lambda)=\frac{1}{\pi}\arctan\frac{\lambda^{p+q}\sin(q-p)\pi - \lambda^q\sin q\pi +\lambda^p\sin p\pi}
    {\lambda^{p+q}\cos(q-p)\pi - \lambda^q\cos q\pi - \lambda^p\cos p\pi +1}\quad \lambda>0,
    \]
    and we obtain the exponential representation
    \[
    f(t)=\exp\left[\beta+\int_0^\infty
    \left(\frac{\lambda}{\lambda^2+1}-\frac{1}{\lambda+t}\right)h(\lambda)\,d\lambda\right],
    \]
    where $ \beta=\Re\log f(i). $ By a tedious calculation we obtain
    \[
    \beta=\log p-\log q+\frac{1}{2}\log\frac{1-\cos q\frac{\pi}{2}}{1-\cos p\frac{\pi}{2}}
    \]
    and the statement is proved.
    \end{proof}

    \section{Applications to quantum information theory}

    \begin{definition}
    We denote by $ \mathcal F_{\text{op}} $ the set of functions $ f\colon\mathbf R_+\to\mathbf R_+ $ satisfying
    \begin{enumerate}[(i)]
    \item $ f $ is operator monotone,
    \item $ f(t)=t f(t^{-1}) $ for all $ t>0, $
    \item $ f(1)=1. $
    \end{enumerate}
    \end{definition}

    The following result was proved in \cite[Theorem 2.1]{kn:audenaert:2008}.

    \begin{theorem}\label{characterization of F_op}
    A function $ f\in\mathcal F_{\text{op}} $ admits a canonical representation
    \begin{gather}\label{canonical representation of f}
    f(t)=\frac{1+t}{2}\exp\int_0^1\frac{(\lambda^2-1)(1-t)^2}{(\lambda+t)(1+\lambda t)(1+\lambda)^2}\,h(\lambda)\,d\lambda,
    \end{gather}
    where the weight function $ h:[0,1]\to[0,1] $ is measurable.
    The equivalence class containing $ h $ is uniquely determined by $ f. $ Any function
    on the given form is in $ \mathcal F_{\text{op}}. $
    \end{theorem}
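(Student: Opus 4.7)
The plan is to invoke the standard exponential representation of a positive operator monotone function and then exploit the symmetry $f(t)=tf(1/t)$ to fold the representing weight from $(0,\infty)$ down to $[0,1]$. Since $f$ is positive and operator monotone, $\log f$ is also operator monotone with $\Im\log f(z)\in[0,\pi]$ on the upper half plane, so by \cite{kn:aronszajn:1956,kn:donoghue:1974,kn:hansen:1981} there exist a unique measurable weight $h_0\colon(0,\infty)\to[0,1]$ and a constant $\beta\in\mathbf{R}$ with
\[
\log f(t)=\beta+\int_0^\infty\left[\frac{\lambda}{\lambda^2+1}-\frac{1}{\lambda+t}\right]h_0(\lambda)\,d\lambda.
\]

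The symmetry $f(t)=tf(1/t)$ rewrites as $\log f(t)-\log f(1/t)=\log t$; using $\frac{1}{\lambda+1/t}-\frac{1}{\lambda+t}=\frac{t^2-1}{(\lambda+t)(1+\lambda t)}$, splitting $\int_0^\infty=\int_0^1+\int_1^\infty$ and substituting $\lambda\mapsto 1/\lambda$ on the tail turns this into
\[
(t^2-1)\int_0^1\frac{h_0(\lambda)+h_0(1/\lambda)}{(\lambda+t)(1+\lambda t)}\,d\lambda=\log t.
\]
A direct partial fraction calculation verifies that the constant weight one solves this identity, and the transform $H\mapsto\int_0^1\frac{H(\lambda)\,d\lambda}{(\lambda+t)(1+\lambda t)}$ is injective on bounded measurable $H$: one extends the integral to an analytic function on $\mathbf{C}\setminus[0,\infty)$ and recovers $H$ from the Plemelj jump across $[0,1]$. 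Hence $h_0(\lambda)+h_0(1/\lambda)=1$ almost everywhere, and $h(\lambda):=h_0(\lambda)$ on $[0,1]$ is a measurable function into $[0,1]$ that encodes $h_0$ globally through $h_0(\lambda)=1-h(1/\lambda)$ for $\lambda>1$.

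Inserting this back into the exponential representation, splitting again at $\lambda=1$ and applying the involution on the tail, the pure ``$1$'' contribution from $h_0=1-h(1/\lambda)$ on $(1,\infty)$ reassembles into $\log\frac{1+t}{2}$ plus a $t$-independent constant, while the $h$-dependent remainder collapses to $\int_0^1 K(\lambda,t)\,h(\lambda)\,d\lambda$ for a kernel $K$ that a routine partial fraction simplification identifies with $\frac{(\lambda^2-1)(1-t)^2}{(\lambda+t)(1+\lambda t)(1+\lambda)^2}$. Since this kernel vanishes at $t=1$, the normalization $f(1)=1$ forces the aggregate constant to vanish and produces the stated formula. The converse direction runs the argument backwards: any measurable $h\colon[0,1]\to[0,1]$ extends to $h_0\in[0,1]$ on $(0,\infty)$ via $h_0(\lambda)=1-h(1/\lambda)$ for $\lambda>1$, the exponential representation then defines a positive operator monotone $f$, and adjusting $\beta$ so that $f(1)=1$ yields an element of $\mathcal{F}_{\text{op}}$; uniqueness of $h$ is inherited from that of $h_0$. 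The main obstacle is the injectivity claim needed to secure $h_0(\lambda)+h_0(1/\lambda)=1$; the kernel identification is lengthy but purely algebraic.
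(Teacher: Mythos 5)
The paper itself does not prove this theorem; it quotes it from \cite[Theorem 2.1]{kn:audenaert:2008}, so there is no in-paper proof to compare against. Your reconstruction follows what is essentially the standard (and, in the cited reference, the actual) route: exponential representation of $\log f$ with bounded density $h_0$ on $(0,\infty)$, the functional equation folding $h_0$ onto $[0,1]$ via $h_0(\lambda)+h_0(1/\lambda)=1$, and a change of variables to reassemble the kernel. The argument is sound, with two points worth tightening. First, the relation $h_0(\lambda)+h_0(1/\lambda)=1$ can be read off directly from boundary values: taking $z\to-\lambda+i0$ in $\log f(z)=\log z+\log f(1/z)$ sends $1/z\to-1/\lambda-i0$, and Schwarz reflection gives $\pi h_0(\lambda)=\pi-\pi h_0(1/\lambda)$ at a.e.\ $\lambda\in(0,1)$; this avoids the injectivity-of-the-Cauchy-transform lemma entirely (your injectivity argument via the Plemelj jump is also correct, just heavier than needed). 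Second, the kernel you obtain ``directly'' after substituting $\lambda\mapsto1/\lambda$ on the tail is not yet the stated one: it is
\[
K(\lambda,t)=\frac{\lambda}{\lambda^2+1}-\frac{1}{\lambda+t}-\frac{1}{\lambda(1+\lambda^2)}+\frac{1}{\lambda(1+\lambda t)}
=\frac{(\lambda^2-1)(1+t^2)}{(1+\lambda^2)(\lambda+t)(1+\lambda t)},
\]
which does \emph{not} vanish at $t=1$. One must subtract $K(\lambda,1)$, absorb $\int_0^1K(\lambda,1)h(\lambda)\,d\lambda$ into the additive constant (which is then forced to vanish by $f(1)=1$ together with the tail contribution $\log\frac{1+t}{\sqrt2}$ from the constant weight), and only then does the identity $K(\lambda,t)-K(\lambda,1)=\frac{(\lambda^2-1)(1-t)^2}{(\lambda+t)(1+\lambda t)(1+\lambda)^2}$ deliver the advertised kernel. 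With that bookkeeping made explicit, both directions of your argument, and the uniqueness claim inherited from the Aronszajn--Donoghue representation, go through.
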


    In addition, for $ z=r e^{i\theta} $ with $ r>0 $ and $ 0<\theta<\pi, $ the weight function $ h $ in the above theorem appears as
    \[
    h(\lambda)=\frac{1}{\pi} \lim_{z\to -\lambda}\Im\log f(z)
    \]
    for almost all $ \lambda\in(0,1]. $ Notice that $ r\to \lambda $ and $ \theta\to\pi $ when $ z\to -\lambda. $

    A monotone metric is a map $ \rho\to K_\rho(A,B) $ from the set $ \mathcal M_n $ of positive definite $ n\times n $ density matrices
    to sesquilinear forms $ K_\rho(A,B) $ defined on $ M_n(\mathbf C) $ satisfying:

    \begin{enumerate}

    \item $ K_\rho(A,A)\ge 0, $ and equality holds if and only if $ A=0. $

    \item $ K_\rho(A,B)=K_\rho(B^*,A^*) $ for all $ \rho\in\mathcal M_n $ and all $ A,B\in M_n(\mathbf C). $

    \item $ \rho\to K_\rho(A,A) $ is continuous on $ \mathcal M_n $ for every $ A\in M_n(\mathbf C). $

    \item $ K_{T(\rho)}(T(A),T(A))\le K_\rho(A,A) $ for every $ \rho\in\mathcal M_n, $ every
    $ A\in M_n(\mathbf C) $ and every stochastic mapping $ T:M_n(\mathbf C)\to M_m(\mathbf C). $

    \end{enumerate}
    A mapping $ T:M_n(\mathbf C)\to M_m(\mathbf C) $ is said to be stochastic if it is completely positive and trace preserving.
    A monotone metric \cite{kn:censov:1982, kn:petz:1996:2} is given on the form
    \begin{equation}\label{Morozova-Chentsov function}
    K_\rho(A,B)=\tr A^* c(L_\rho,R_\rho) B,
    \end{equation}
    where $ c $ is a so called
    Morozova-Chentsov function and $ c(L_\rho,R_\rho) $ is the function taken
    in the pair of commuting left and right multiplication operators (denoted
    $ L_\rho $ and $ R_\rho, $ respectively) by $ \rho. $ The Morozova-Chentsov functions are of the form
    \begin{equation}\label{Morozova-Chentsov function}
    c(x,y)=\frac{1}{y g(x y^{-1})}\qquad x,y >0,
    \end{equation}
    where $ g\in \mathcal F_{\text{op}}.  $

     There is an involution $ f\to f^{\#} $
    on the set of positive operator monotone functions $ f $ defined in the positive half-axis given by
    \[
    f^{\#}(t)=t f(t^{-1})\qquad t>0,
    \]
    cf. \cite{kn:hansen:2006:2}. It plays a role in the following result.

    \begin{theorem}
    The function
    \[
    c(x,y)=\frac{q}{p}\cdot\frac{x^p-y^p}{x^q-y^q}\, (xy)^{-(1-q+p)/2}\qquad x,y>0
    \]
    is a Morozova-Chentsov function for $ 0<p\le q\le 1. $ The generating operator monotone function $ g $ in (\ref{Morozova-Chentsov function}) has the exponential representation
    \[
    g(t)=\frac{1+t}{2}\exp\int_0^1\frac{(\lambda^2-1)(1-t)^2}{(\lambda+t)(1+\lambda t)(1+\lambda)^2}\,\left(\frac{1-q+p}{2} + h(\lambda)\right) d\lambda,
    \]
    where the weight function $ h $ is the restriction to the unit interval of the function given in (\ref{generating weight function}).
    \end{theorem}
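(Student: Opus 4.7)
\emph{Proof plan.} The plan is to reduce everything to properties of a single-variable function built from $f$. Setting $t = x/y$ and using $x^p - y^p = y^p(t^p - 1)$ together with the analogous identity for $q$, a short algebraic simplification gives
\[
c(x,y) = y^{-1}\cdot\frac{q}{p}\cdot\frac{t^p-1}{t^q-1}\cdot t^{-(1-q+p)/2} = \frac{1}{y\,g(t)},
\]
where $g(t) := f(t)\,t^{(1-q+p)/2}$ and $f$ is the function of Theorem~\ref{main theorem}. Since a Morozova-Chentsov function is determined by a generating $g\in\mathcal F_{\text{op}}$ through (\ref{Morozova-Chentsov function}), the problem reduces to showing $g\in\mathcal F_{\text{op}}$ and then identifying its canonical representation.

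Next I would verify the three defining properties of $\mathcal F_{\text{op}}$. Normalization $g(1)=1$ is immediate, and the reflection identity $g(t)=t\,g(t^{-1})$ follows from the direct computation $f(t^{-1})=t^{p-q}f(t)$ combined with the choice of exponent $(1-q+p)/2$. For operator monotonicity I would multiply the exponential representation of $f$ from the preceding theorem by the standard representation
\[
t^s = \exp\int_0^\infty\!\left(\frac{\lambda}{\lambda^2+1}-\frac{1}{\lambda+t}\right)s\,d\lambda,\qquad s=\tfrac{1-q+p}{2}.
\]
The resulting Loewner weight of $g$ on $(0,\infty)$ is $h(\lambda)+s$, with $h$ the function in (\ref{generating weight function}). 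Since the remark after the preceding theorem gives $0\le h(\lambda)\le 1/2$ and the assumption $0<p\le q\le 1$ forces $0\le s\le 1/2$, the combined weight lies in $[0,1]$, and Loewner's theorem certifies $g$ as operator monotone.

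Finally I would invoke Theorem~\ref{characterization of F_op} to obtain the canonical exponential representation of $g$, whose weight is characterized by the boundary formula $\tilde h(\lambda)=\pi^{-1}\lim_{z\to-\lambda}\Im\log g(z)$ for $\lambda\in(0,1]$. Writing $\log g(z)=\log f(z)+s\log z$ with principal branches, the boundary calculation already performed for $f$ in the proof of the preceding theorem gives $\Im\log f(z)\to\pi h(\lambda)$, while $\Im\log z\to\pi$ as $z\to-\lambda$. Hence $\tilde h(\lambda)=h(\lambda)+(1-q+p)/2$ on $(0,1]$, matching the statement. The principal delicate point is precisely the inclusion $h(\lambda)+s\in[0,1]$, which is what the hypothesis $0<p\le q\le 1$ guarantees and what licences the application of Theorem~\ref{characterization of F_op}; the prefactor $(1+t)/2$ is then furnished automatically by that theorem, so no separate matching of constants is required.
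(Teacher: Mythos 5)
Your proposal is correct and follows the same overall architecture as the paper's proof: identify $g(t)=t^{(1-q+p)/2}f(t)$ as the generating function for $c$, show $g\in\mathcal F_{\text{op}}$, and then read off the weight from $\Im\log g(z)=\Im\log f(z)+\tfrac{1-q+p}{2}\arg z$ on the negative half-axis via Theorem~\ref{characterization of F_op} and the remark following it. The one step you handle by a genuinely different lemma is the operator monotonicity of $g$. The paper observes that $g=\sqrt{f\,f^{\#}}$ is the pointwise geometric mean of the two positive operator monotone functions $f$ and $f^{\#}$, hence operator monotone, and is automatically a fixed point of the involution $\#$ --- so monotonicity and the symmetry $g(t)=tg(t^{-1})$ come in one stroke, using nothing about $f$ beyond Theorem~\ref{main theorem}. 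You instead add the exponential-representation weights of $f$ and of $t^{(1-q+p)/2}$ and invoke the Aronszajn--Donoghue criterion that a positive function is operator monotone exactly when its exponential weight lies in $[0,1]$; this is sound, but it leans on the quantitative bound $0\le h(\lambda)\le 1/2$ recorded after the exponential-representation theorem and on $p\le q$ to get $\tfrac{1-q+p}{2}\le\tfrac12$, and it makes the proof of the representation formula depend on the preceding theorem rather than only on Theorem~\ref{main theorem}. In compensation, your route produces the combined weight $h(\lambda)+\tfrac{1-q+p}{2}$ on all of $(0,\infty)$ directly, so the final identification of the weight on $(0,1]$ is essentially immediate, whereas the paper's geometric-mean argument is shorter and makes the symmetry transparent. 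Your boundary computation $\Im\log z\to\pi$ as $z\to-\lambda$ and the resulting weight $h(\lambda)+\tfrac{1-q+p}{2}$ agree with the paper's.
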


    \begin{proof}
    We first notice that the function
    \begin{equation}\label{generating operator monotone function}
    g(t)=\frac{p}{q}\cdot \frac{t^q -1}{t^p -1}\,t^{(1-q+p)/2}\qquad t>0
    \end{equation}
    generates $ c(x,y) $ according to (\ref{Morozova-Chentsov function}). We thus have to prove that $ g\in \mathcal F_{\text{op}} $ for $ 0<p\le q\le 1. $
    Choosing the function $ f $ as defined in (\ref{definition of f}), we obtain
    \[
    f^{\#}(t)=t^{1-q+p} f(t)\qquad t>0.
    \]
    Since obviously the function
    \[
    \sqrt{f(t) f^{\#}(t)}=t^{(1-q+p)/2} f(t)\qquad t>0
    \]
    is a fix-point under the involution $ \# $ and the geometric mean is operator monotone, we obtain $ g\in \mathcal F_{\text{op}} $ as desired. We have thus proved the first part of the theorem.
    Setting $ z=r e^{i\theta} $ for $ r>0 $ and $ 0<\theta<\pi $ we calculate that
    \[
    \Im\log g(z)=\frac{1-q+p}{2}\theta + \Im\log f(z).
    \]
    The result now follows by Theorem~\ref{characterization of F_op} and the remarks below.
    \end{proof}

    \nocite{kn:bhatia:1997}

 {\footnotesize


      \vfill

      \noindent Frank Hansen: Department of Economics, University
       of Copenhagen, Studiestraede 6, DK-1455 Copenhagen K, Denmark.}

      \end{document}